\newlength{\defbaselineskip}
\numberwithin{equation}{section}
\newtheorem{thm}{Theorem}[section]
\newtheorem{lem}[thm]{Lemma}
\theoremstyle{definition}
\theoremstyle{remark}
\numberwithin{equation}{section}
\begin{document}

\title{Pointwise convergence for the elastic wave equation}

\author{Chu-Hee Cho, Seongyeon Kim, Yehyun Kwon and Ihyeok Seo}

\thanks{This work was supported by NRF-2020R1I1A1A01072942 (C. Cho), a KIAS Individual Grant (MG082901) at Korea Institute for Advanced Study (S. Kim), a KIAS Individual Grant (MG073701) at Korea Institute for Advanced Study and NRF-2020R1F1A1A01073520 (Y. Kwon), and NRF-2022R1A2C1011312 (I. Seo).}

\subjclass[2020]{Primary: 35L05; Secondary: 42B25}
\keywords{elastic wave equation, maximal estimate, pointwise convergence}

\address{Department of Mathematical Sciences and RIM, Seoul National University, Seoul 08826, Republic of Korea}
\email{akilus@snu.ac.kr}

\address{School of Mathematics, Korea Institute for Advanced Study, Seoul 02455, Republic of Korea}
\email{synkim@kias.re.kr}
\email{yhkwon@kias.re.kr}

\address{Department of Mathematics, Sungkyunkwan University, Suwon 16419, Republic of Korea}
\email{ihseo@skku.edu}

\begin{abstract}
We study pointwise convergence of the solution to the elastic wave equation to the initial data which lies in the Sobolev spaces. We prove that the solution converges along every lines to the initial data almost everywhere whenever the initial regularity is greater than one half. We show this is almost optimal.
\end{abstract} 

\maketitle

\section{Introduction}
We consider the Cauchy problem for the elastic wave equation
\begin{equation}\label{e:elastic}
\begin{cases}
	\partial_t^2 u -\Delta^\ast u = 0, \quad (x,t)\in\mathbb{R}^n\times\mathbb{R}, 
	\\ u(x,0)=f(x), \quad \partial_t u(x,0)=0,
\end{cases}
\end{equation}
where $\Delta^\ast$ denotes the Lam\'e operator defined by
\begin{equation*} 
	\Delta^\ast u =\mu\Delta u +(\lambda + \mu)\nabla \mathrm{div} u.
\end{equation*}
The Laplacian $\Delta$ acts on each component of a vector field $u$.  Moreover, the following standard condition on the Lam\'e constants $\lambda,\mu\in\mathbb{R}$ is imposed to guarantee the ellipticity of $\Delta^\ast$: 
\begin{equation} \label{e:elliptic}
	\mu >0, \quad \lambda + 2\mu >0.
\end{equation}
The equation in \eqref{e:elastic} has been widely used as a model of wave propagation in an elastic medium, where $u$ denotes the displacement field of the medium (see e.g., \cite{LL,MH}). In particular, it is the classical wave equation if $\lambda + \mu = 0$.

In this paper, we are concerned with determining the optimal regularity $s>0$ such that 
\begin{equation*}
	\lim_{t\rightarrow 0} e^{it\sqrt{-\Delta^\ast}} f(x) = f(x)\ \  \text{a.e.} \ \  x
\end{equation*}
for all $f \in H^s (\mathbb{R}^n)$. The elastic wave propagator $e^{it\sqrt{-\Delta^\ast}}$ provides the solution  $u=\frac12(e^{it\sqrt{-\Delta^\ast}}f+e^{-it\sqrt{-\Delta^\ast}}f)$ for the problem \eqref{e:elastic} (see \cite{KKS}).

For the classical wave equation, the pointwise convergence has been studied by means of the maximal estimate
\begin{equation}\label{00}
\Big\|\sup_{t\in(0,1)} \big|e^{i t\sqrt{-\Delta}} f(x)\big|\Big\|_{L^2(\mathbb{R}^n)} \leq C\|f\|_{H^s(\mathbb{R}^n)}
\end{equation}
which implies the convergence.
Cowling \cite{Cow} proved \eqref{00} for $s>1/2$.
On the other hand, it was shown by Walther \cite{W} that \eqref{00} fails for $s\leq 1/2$. (See also \cite{RV} for maximal estimates in $L^p$.) While the convergence is rather well understood for the wave equation, to the best of the authors' knowledge, there doesn't seem to be any corresponding literature for the elastic wave equation. In this regard, it would be interesting to ask whether the results for the wave equation are still valid for the elastic case. We obtain the following.

\begin{thm}\label{maximal}
	Let $\theta\in S^{n-1}$ and $v \ge 0$. 
	If $f \in H^s(\mathbb{R}^n)$ with $s>1/2$, then
	\begin{equation} \label{max}
		\bigg\|\sup_{t\in(-1,1)} \Big|e^{i t\sqrt{-\Delta^\ast}} f(x+v t\theta)\Big|\bigg\|_{L^2(\mathbb{R}^n)} \leq C\|f\|_{H^s(\mathbb{R}^n)}
	\end{equation}
	uniformly in $\theta$.
Furthermore, this estimate fails if $s<1/2$.
\end{thm}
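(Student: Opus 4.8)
To prove Theorem~\ref{maximal} I would first diagonalize the Lam\'e operator via the Helmholtz decomposition. The symbol of $-\Delta^\ast$ is the matrix $M(\xi)=\mu|\xi|^2 I+(\lambda+\mu)\,\xi\otimes\xi$, with eigenvalues $(\lambda+2\mu)|\xi|^2$ on $\mathbb{R}\xi$ and $\mu|\xi|^2$ on $\xi^\perp$, both positive by \eqref{e:elliptic}. Hence, with $c_p=\sqrt{\lambda+2\mu}$, $c_s=\sqrt{\mu}$, $\Pi_\xi=|\xi|^{-2}\,\xi\otimes\xi$, and $P=\mathcal{F}^{-1}\Pi_\xi\mathcal{F}$, $Q=I-P$ (both bounded on $H^s$), absorbing the translation into the phase gives
\[
 e^{it\sqrt{-\Delta^\ast}}f(x+vt\theta)=\mathcal{F}^{-1}\!\big[e^{it\psi_p}\widehat{Pf}\,\big](x)+\mathcal{F}^{-1}\!\big[e^{it\psi_s}\widehat{Qf}\,\big](x),\qquad \psi_{p,s}(\xi)=c_{p,s}|\xi|+v\,\theta\cdot\xi .
\]
Since $|\psi_{p,s}(\xi)|\le A|\xi|$ with $A=\max(c_p,c_s)+v$ \emph{independent of $\theta$}, the positive part of the theorem reduces to the following lemma: for any measurable $\psi$ with $|\psi(\xi)|\le A|\xi|$ and any $s>1/2$, the operator $g\mapsto\sup_{|t|<1}|\mathcal{F}^{-1}[e^{it\psi}\widehat g\,]|$ is bounded $H^s(\mathbb{R}^n)\to L^2(\mathbb{R}^n)$ with a bound depending only on $A,n,s$.

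I would prove this lemma by a Littlewood--Paley decomposition $g=\sum_N g_N$ (with $\widehat{g_N}$ supported in $|\xi|\sim N$, $N$ dyadic) together with a Sobolev-embedding-in-time argument: applying $\sup_{t\in I}|F_N(t,x)|^2\le|F_N(t_0,x)|^2+2\int_I|F_N||\partial_tF_N|\,dt$ to $F_N(t,x)=\mathcal{F}^{-1}[e^{it\psi}\widehat{g_N}](x)$ on intervals $I\ni t_0$ of length $\sim(AN)^{-1}$ partitioning $(-1,1)$, integrating in $x$, and using Plancherel ($\|F_N(t,\cdot)\|_{L^2_x}=\|g_N\|_{L^2}$ and $\|\partial_tF_N(t,\cdot)\|_{L^2_x}=\|\psi\widehat{g_N}\|_{L^2}\lesssim AN\|g_N\|_{L^2}$) gives $\big\|\sup_{|t|<1}|F_N|\big\|_{L^2}\lesssim (AN)^{1/2}\|g_N\|_{L^2}$. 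Summing over $N$ and applying Cauchy--Schwarz, $\sum_N N^{1/2}\|g_N\|_{L^2}\le\big(\sum_N N^{1-2s}\big)^{1/2}\|g\|_{H^s}\lesssim_s\|g\|_{H^s}$, and the geometric series converges precisely when $s>1/2$; this proves the lemma and hence \eqref{max}.

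For the necessity when $s<1/2$ I would adapt Walther's counterexample \cite{W} to an irrotational field. Take $f$ with $\widehat f(\xi)=|\xi|^{-1}\xi\,\mathbf{1}_{A_R}(\xi)$ on the annulus $A_R=\{R\le|\xi|\le2R\}$, so $|\widehat f|=\mathbf{1}_{A_R}$, $\|f\|_{H^s}\sim R^{s+n/2}$, and (since $\Pi_\xi$ fixes $|\xi|^{-1}\xi$, the $S$-part is annihilated) $e^{it\sqrt{-\Delta^\ast}}f(x)=\int_{A_R}|\xi|^{-1}\xi\,e^{i(x\cdot\xi+tc_p|\xi|)}\,d\xi$. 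In polar coordinates the angular integral equals $-i\,\Psi'(\rho|x|)\,|x|^{-1}x$ with $\Psi(r)=\widehat{d\sigma}_{S^{n-1}}(re_1)$; inserting the asymptotics $\Psi'(r)=r^{-(n-1)/2}(b_+e^{ir}+b_-e^{-ir})+O(r^{-(n+1)/2})$, $b_\pm\ne0$, and choosing $t=|x|/c_p$ (admissible once $|x|<c_p$) makes one of the two principal terms non-oscillatory in $\rho$, of size $\sim|x|^{-(n-1)/2}R^{(n+1)/2}$, while every remaining term is smaller by a factor $\gtrsim|x|R$. Hence $|e^{it\sqrt{-\Delta^\ast}}f(x)|\gtrsim|x|^{-(n-1)/2}R^{(n+1)/2}$ on the shell $C/R\le|x|\le c$ (with $C$ large, $c<c_p$ small); integrating $|x|^{-(n-1)}$ over this shell (which contributes a factor $\sim1$) gives $\big\|\sup_{|t|<1}|e^{it\sqrt{-\Delta^\ast}}f|\big\|_{L^2}\gtrsim R^{(n+1)/2}$, so the ratio of the two sides of \eqref{max} is $\gtrsim R^{1/2-s}\to\infty$ as $R\to\infty$ whenever $s<1/2$.

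I expect the main obstacle to be the necessity part: one must choose the annular region in $x$ and, within the supremum, the time $t=|x|/c_p$ so that the lone non-oscillatory term in the radial integral genuinely beats every oscillatory error and every remainder coming from the expansion of $\Psi$ — this is what forces the inner cutoff $|x|\gtrsim C/R$, and some care is needed to check that the surviving shell still carries $O(1)$ of the weighted $L^2$ mass. The positive part is comparatively routine; the one point to watch is that its constant (entering through $\|\partial_tF_N(t,\cdot)\|_{L^2_x}\lesssim AN\|g_N\|_{L^2}$) depends on $v$ and on $\lambda,\mu$ but never on the direction $\theta$, which is exactly the uniformity asserted in Theorem~\ref{maximal}.
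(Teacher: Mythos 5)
Your argument takes a genuinely different route from the paper on all three fronts, and is essentially correct except for one omission in the necessity half.

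\textbf{Diagonalization.} You replace the paper's rotation-based diagonalization $R_\pm(\xi)\Lambda(\xi)R_\pm^t(\xi)$ (which needs the partition $\{\mathcal P_\pm\}$ because the rotation cannot be chosen globally smooth) by the Helmholtz spectral projectors $\Pi_\xi=|\xi|^{-2}\xi\otimes\xi$ and $I-\Pi_\xi$. Since $L(\xi)=\mu|\xi|^2 I+(\lambda+\mu)\xi\otimes\xi$ is diagonalized exactly by these projectors, your identity $e^{it\sqrt{-\Delta^\ast}}f(x+vt\theta)=\mathcal F^{-1}[e^{it\psi_p}\widehat{Pf}]+\mathcal F^{-1}[e^{it\psi_s}\widehat{Qf}]$ is correct, and $P,Q$ are $0$-homogeneous, $L^2$-orthogonal multipliers commuting with $\langle D\rangle^s$, hence $H^s$-bounded. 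This is cleaner than the paper's device for this particular purpose.

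\textbf{Sufficiency.} Your Littlewood--Paley + fundamental-theorem-of-calculus argument ($\sup_t|F_N|^2\le|F_N(t_0)|^2+2\int|F_N||\partial_tF_N|$, Plancherel, $\|\partial_tF_N\|_{L^2_{x,t}}\lesssim AN\|g_N\|_{L^2}$, then Cauchy--Schwarz in the dyadic sum for $s>1/2$) is correct and yields the same $N^{1/2}$ loss per frequency block. The paper instead proves the global $L^2_{x,t}$ bounds for $T_{v,\theta}f$ and $\partial_t T_{v,\theta}f$, interpolates to $L^2_xH^s_t$, and applies $H^s(\mathbb R)\hookrightarrow L^\infty(\mathbb R)$. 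The two are the same idea packaged differently; both track the $\theta$-uniformity correctly through $A=\max(c_p,c_s)+v$.

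\textbf{Necessity --- here is the gap.} The paper's construction uses an angular sector $F$ of width $\sim(\alpha N)^{-1/2}$, a test function $\widehat{f_F}=\chi_F e_1$, and $t(x)=-|x|/\alpha$ with $\alpha=\sqrt{\lambda+2\mu}+v$; this makes the phase $\Phi(x,\xi)=(x+vte_1)\cdot\xi+t\sqrt{\lambda+2\mu}|\xi|$ uniformly small on $E\times F$ \emph{for every} $v\ge0$. Your Walther-style annulus example with $\widehat f(\xi)=|\xi|^{-1}\xi\,\mathbf 1_{A_R}$ and $t=|x|/c_p$ correctly kills the $S$-wave (since $\Pi_\xi$ fixes $\xi/|\xi|$) and, via the asymptotics of $\Psi'=(\widehat{d\sigma})'$, gives a lower bound $\gtrsim R^{(n+1)/2}$ on a shell of $O(1)$ measure, hence $R^{1/2-s}\to\infty$. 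But this calculation is done with the phase $x\cdot\xi+tc_p|\xi|$, i.e.\ at $v=0$. For $v>0$ the phase becomes $(x+t(x)v\theta)\cdot\xi+t(x)c_p|\xi|$, and the choice $t=|x|/c_p$ no longer makes the radial oscillation stationary, because the linear-in-$\xi$ part now depends on $t(x)$ as well; one would have to re-derive the stationary-phase cancellation in this moving frame (this is precisely what the paper's $\alpha=c_p+v$ and the choice $\theta=e_1$, $t(x)=-|x|/\alpha$ achieve). So as written your counterexample establishes failure only when $v=0$; to cover the full statement you would need to adapt the choice of $t(x)$ (and likely align $x$ with $\theta$ as the paper does) so that the combined phase is small or non-oscillatory for general $v$.

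Minor points: you need $b_\pm\neq0$ in the expansion $\Psi'(r)=r^{-(n-1)/2}(b_+e^{ir}+b_-e^{-ir})+O(r^{-(n+1)/2})$, which holds but should be justified from the Bessel asymptotics; and your remark that the constant in $\|\partial_tF_N\|_{L^2}\lesssim AN\|g_N\|_{L^2}$ depends on $v,\lambda,\mu$ but not $\theta$ is exactly the uniformity the theorem asserts and is correctly accounted for.
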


By a standard argument, Theorem \ref{maximal} implies that 
\begin{equation} \label{pwconv}
	\lim_{t \rightarrow 0}e^{i t\sqrt{-\Delta^\ast}} f(x+v t\theta) = f(x)\ \  \text{a.e.} \ \  x
\end{equation}
for $f \in H^s(\mathbb{R}^n)$ with $s>1/2$.
If $s < 1/2$, it follows from Stein's maximal principle (\cite{St}) that \eqref{pwconv} fails. The convergence \eqref{pwconv} says that the solution converges to initial data along the line $\{(x+vt\theta,t)\colon t\ge0\}$ on the light cone with speed $v$; see Figure \ref{vt}. The convergence along lines on the light cone is new even for the wave equation (the case $\lambda+\mu=0$).

\begin{figure} [t] \label{vt}
	\begin{center}
		{\includegraphics[width=0.6\textwidth]{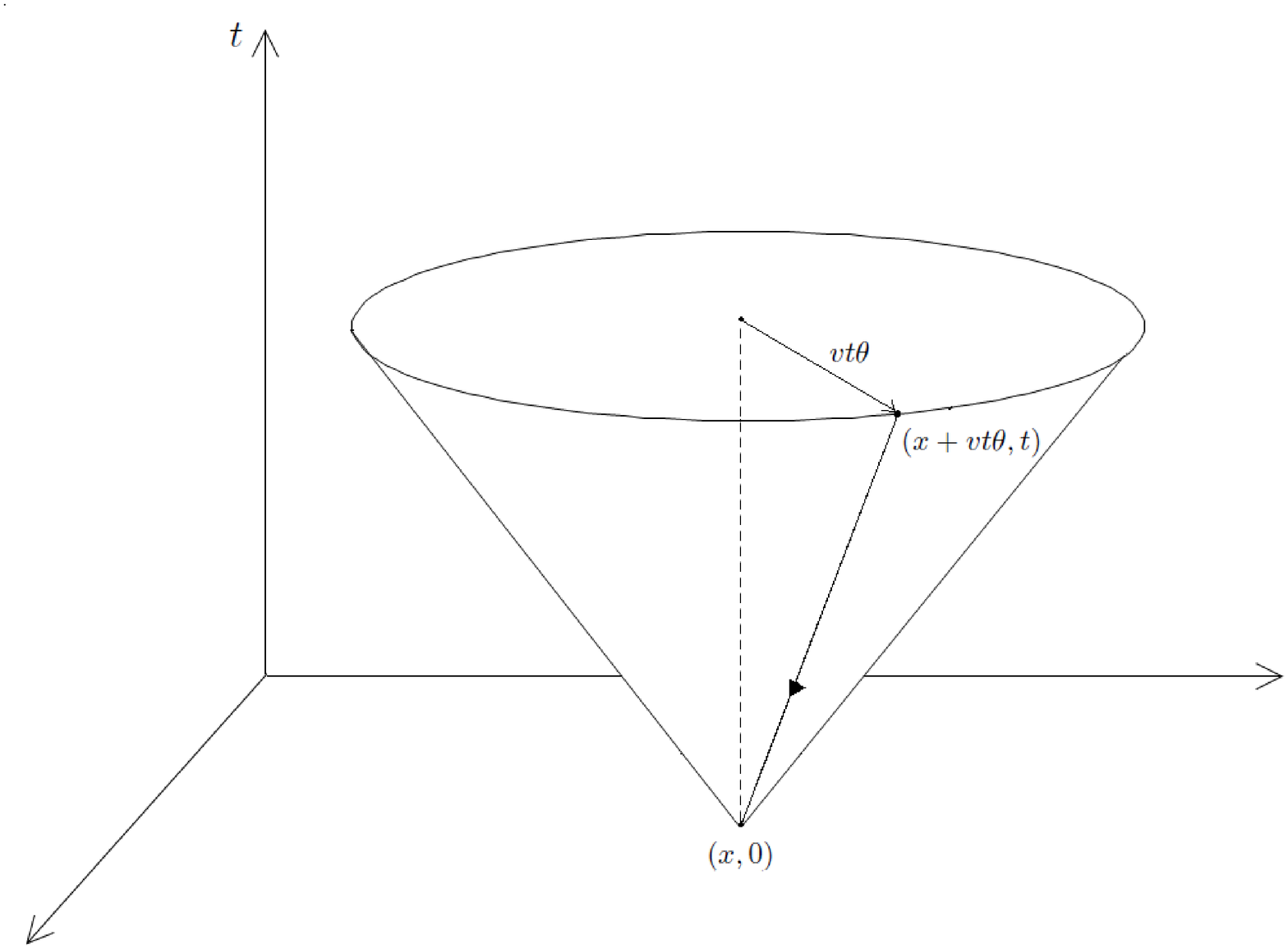}}
	\end{center}
	\caption{The light cone with speed $v$}
\end{figure}

\subsubsection*{Notations}
We denote by $S^{n-1}$ the unit sphere in $\mathbb R^n$ centered at the origin.
We use
$\|f\|_{L^2(\mathbb R^n)} = \|\{f_j\}\|_{l_j^2 L^2}$ for a vector-valued function $f=(f_1, \ldots, f_n )$, and $|M|=\big(\sum_{i,j=1}^n |M_{ij}|^2\big)^{1/2}$ for a matrix $M=(M_{ij})$.
The letter $C$ stands for a positive constant which may be different
at each occurrence.
Also, $A\lesssim B$ denotes $A\leq CB$ for a constant $C>0$, and $A\sim B$ denotes $A \lesssim B \lesssim A$.

\section{Diagonlaization of $\Delta^\ast$} \label{sec2}
Recently, the three of the authors and Lee \cite{KKLS} diagonalized the Lam\'e operator  so that $\Delta^\ast = P\Delta P^{-1}$ with a certain invertible matrix $P$  to study the elastic wave equation. We utilize the diagonalization to prove Theorem \ref{maximal} in the following sections.

Let us recall from \cite[Section 2]{KKLS} the diagonalization process and notations to make this article self-contained (the readers are encouraged to refer to \cite{KKLS} for details; also see Figure \ref{rot}). 
For the unit vector $e_1=(1,0,\ldots, 0)^t\in \mathbb R^n$, let $S_\pm =\{\omega \in S^{n-1}\colon -1/\sqrt2 \le \omega\cdot (\pm e_1)\le 1\}$ and $\mathbb R_{\pm}^n=\{\xi\in \mathbb R^n\setminus \{0\}\colon \xi/|\xi|\in S_\pm \}$. 
For every $\omega\in S_\pm\setminus \{\pm e_1\}$, we define the arc $\mathcal C_\pm(\omega)=S_\pm\cap \,\mathrm{span}\{e_1,\omega\}$, which is the intersection of $S_\pm$ and the great circle on $S^{n-1}$ passing through $e_1$ and $\omega$.
Now we take $\rho_{\pm}(\omega)\in \mathrm{SO}(n)$ so that its transpose $\rho_{\pm}(\omega)^t$ is the unique rotation mapping $\omega$ to $\pm e_1$ along the arc $\mathcal C_\pm(\omega)$ and satisfying 
$\rho_\pm(\omega)^t y=y$ whenever $y\in \mathrm{span}\{e_1,\omega\}^\perp$. When $\omega=\pm e_1$ we set $\rho_{\pm}(\omega)^t=I_n$.
We define the maps $R_{\pm}:\mathbb{R}^n_{\pm} \rightarrow \mathrm{SO}(n)$ 
and the projections $\mathcal P_\pm$ by
$$R_{\pm}(\xi)=\rho_\pm (\xi/|\xi|) \ \ \textrm{and} \ \ \widehat{\mathcal P_\pm f} (\xi) = \varphi_\pm (\xi/|\xi|) \widehat f(\xi), \quad \xi\in \mathbb R^n_\pm,$$ 
where $\{\varphi_+, \varphi_-\}$ is a smooth partition of unity on $S^{n-1}$ subordinate to the covering $\{\mathrm{int} S_+, \mathrm{int} S_-\}$.
We also set $D=-i\nabla$ and denote by $m(D)f=(m \widehat f\,)^\vee$ the Fourier multiplier defined by a bounded (matrix-valued) function $m$.

\begin{figure} [t] \label{rot}
	\begin{center}
		{\includegraphics[width=0.45\textwidth]{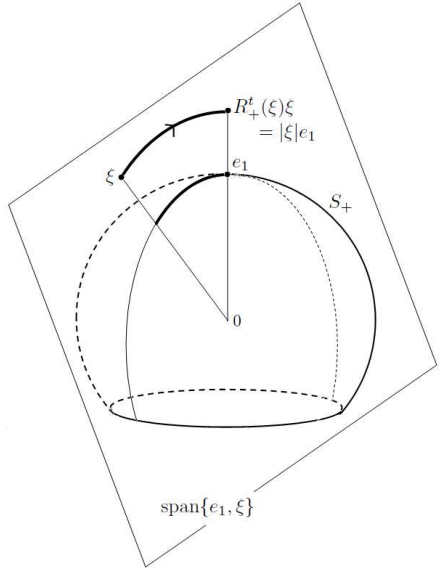}}
	\end{center}
	\caption{The rotation $R_+^t(\xi)$ for $\xi \in \mathbb{R}_+^n$}
\end{figure}

\begin{lem}[\cite{KKLS}]
	Let $L(\xi)$ be the $n\times n$ matrix-valued multiplier associated to $-\Delta^\ast$ and let $\Lambda(\xi)=\mathrm{diag} ((\lambda+2\mu)|\xi|^2, \mu|\xi|^2, \ldots, \mu|\xi|^2 )$.  
Then
\begin{equation}\label{e:diag}
	-\Delta^\ast=L(D)=\sum_\pm L(D)P_\pm =\sum_\pm R_\pm (D)\Lambda(D)R^t_\pm (D) \mathcal P_\pm.
\end{equation}
\end{lem}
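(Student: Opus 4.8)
The plan is to reduce the operator identity \eqref{e:diag} to a pointwise identity among the matrix-valued symbols, since every operator appearing there is a Fourier multiplier and composing Fourier multipliers amounts to taking the pointwise matrix product of their symbols.

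First I would record the symbol $L(\xi)$ of $-\Delta^\ast$. Taking Fourier transforms in $\Delta^\ast u=\mu\Delta u+(\lambda+\mu)\nabla\mathrm{div}\,u$ and using $\widehat{\Delta u}(\xi)=-|\xi|^2\widehat u(\xi)$ together with $\widehat{\nabla\mathrm{div}\,u}(\xi)=-(\xi\otimes\xi)\widehat u(\xi)$, one obtains
\[
L(\xi)=\mu|\xi|^2 I_n+(\lambda+\mu)\,\xi\otimes\xi=|\xi|^2\bigl(\mu I_n+(\lambda+\mu)\,\omega\otimes\omega\bigr),\qquad \omega:=\xi/|\xi|.
\]
Since $\{\varphi_+,\varphi_-\}$ is a partition of unity on $S^{n-1}$, the symbol of $\mathcal P_\pm$ is the scalar $\varphi_\pm(\omega)I_n$ and $\varphi_+(\omega)+\varphi_-(\omega)=1$; hence $L(D)=L(D)(\mathcal P_++\mathcal P_-)=\sum_\pm L(D)\mathcal P_\pm$, which is the middle equality of \eqref{e:diag} (with $P_\pm=\mathcal P_\pm$).

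The heart of the matter is the identity $L(\xi)=R_\pm(\xi)\,\Lambda(\xi)\,R_\pm(\xi)^t$ for $\xi\in\mathbb R^n_\pm$. Here I would invoke the defining property of the construction preceding the lemma: $R_\pm(\xi)^t=\rho_\pm(\omega)^t$ is the rotation carrying $\omega$ to $\pm e_1$, so $R_\pm(\xi)^t\omega=\pm e_1$. Writing $\omega\otimes\omega=\omega\omega^t$ and using $R^t\,\omega\omega^t\,R=(R^t\omega)(R^t\omega)^t$ for $R\in\mathrm{SO}(n)$, together with $(\pm e_1)(\pm e_1)^t=e_1e_1^t=\mathrm{diag}(1,0,\dots,0)$, we compute
\[
R_\pm(\xi)^t L(\xi) R_\pm(\xi)=|\xi|^2\bigl(\mu I_n+(\lambda+\mu)\,e_1e_1^t\bigr)=\mathrm{diag}\bigl((\lambda+2\mu)|\xi|^2,\mu|\xi|^2,\dots,\mu|\xi|^2\bigr)=\Lambda(\xi).
\]
As $R_\pm(\xi)$ is orthogonal this rearranges to $L(\xi)=R_\pm(\xi)\Lambda(\xi)R_\pm(\xi)^t$ on $\mathbb R^n_\pm$. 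Multiplying by $\varphi_\pm(\omega)$---supported in $\mathrm{int}\,S_\pm$, so that only values on $\mathbb R^n_\pm$ matter---and summing over $\pm$ yields $L(\xi)=\sum_\pm R_\pm(\xi)\Lambda(\xi)R_\pm(\xi)^t\varphi_\pm(\omega)$, i.e.\ the symbol identity behind the last equality of \eqref{e:diag}.

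The only point requiring a little care is the legitimacy of reading \eqref{e:diag} as a composition of multipliers: $\Lambda(D)$ is a second-order differential operator with unbounded symbol, whereas $R_\pm(D)$ and $\mathcal P_\pm$ have bounded symbols that are smooth on the cones $\mathbb R^n_\pm$ but defined only there---a single smooth orthonormal frame diagonalizing $L(\xi)$ over all of $S^{n-1}$ need not exist, which is exactly why the two charts $S_\pm$ and the rotation-valued maps $\rho_\pm$ were introduced. One should therefore check that $R_\pm(D)\Lambda(D)R_\pm(D)^t\mathcal P_\pm$ is well defined on, say, the Schwartz class or on $H^s$, and that the composition rule applies there because the symbol of each factor is $C^\infty$ on $\mathbb R^n_\pm$, which contains the support cone of the symbol of $\mathcal P_\pm$; this smoothness follows in turn from that of $\omega\mapsto\rho_\pm(\omega)$ on $S_\pm$ and the cutoff $\varphi_\pm$. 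Modulo this routine verification, the proof is exactly the rank-one diagonalization displayed above, and I do not anticipate any serious obstacle---the substantive work, namely producing a smooth-in-$\xi$ diagonalizing rotation via the two charts, is precisely what the construction preceding the lemma (taken from \cite{KKLS}) already supplies.
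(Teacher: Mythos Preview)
Your argument is correct. Note, however, that the paper does not itself prove this lemma: it is quoted from \cite{KKLS}, and the surrounding Section~\ref{sec2} only recalls the construction of $R_\pm$ and $\mathcal P_\pm$. Your computation---writing $L(\xi)=|\xi|^2(\mu I_n+(\lambda+\mu)\omega\omega^t)$ and conjugating by the rotation $R_\pm(\xi)$ that sends $\omega$ to $\pm e_1$---is exactly the intended diagonalization, and is the argument one finds in \cite{KKLS}. The remarks you add about well-definedness (smoothness of $\rho_\pm$ on $S_\pm$, support of $\varphi_\pm$) are the right caveats and require nothing beyond what the construction already provides.
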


The positive square root of $\Lambda$ exists by the condition \eqref{e:elliptic}, so we have  
\begin{equation}\label{e:diag2}
	\sqrt{-\Delta^\ast}=\sqrt L (D)=\sum_\pm R_\pm (D)\sqrt\Lambda (D)R^t_\pm (D)\mathcal P_\pm 
\end{equation}
with $\sqrt{\Lambda} (\xi)=\mathrm{diag} (\sqrt{\lambda+2\mu}|\xi|, \sqrt{\mu}|\xi|, \ldots, \sqrt{\mu}|\xi| )$.
Furthermore, it follows that
\begin{equation}\label{dia}
	e^{it\sqrt{-\Delta^\ast}} = \sum_\pm e^{it \sqrt L (D)}\mathcal P_\pm = \sum_\pm R_\pm(D) e^{it \sqrt \Lambda (D)} R_\pm^t(D)\mathcal P_\pm,
\end{equation}
where  
\[	e^{it \sqrt \Lambda (D)} =\mathrm{diag} \big(e^{it\sqrt{-(\lambda+2\mu)\Delta}}, e^{it\sqrt{-\mu\Delta}}, \ldots, e^{it\sqrt{-\mu\Delta}} \big).	\]

\section{Proof of Theorem \ref{maximal}: The necessity of $s\ge 1/2$}
In this section, we construct an example to show that the maximal estimate \eqref{max} implies $s\ge 1/2$. We need only to consider $\theta=e_1$. By \eqref{dia}, write
\begin{equation}\label{propa}
e^{it\sqrt{-\Delta^\ast}}f(x+vte_1) 
	= \sum_{\pm} \int_{\mathbb{R}^n} R_{\pm}(\xi) e^{ i((x+vt e_1)\cdot\xi I_n + t\sqrt{\Lambda} (\xi))} R_{\pm}^t(\xi) \widehat{\mathcal{P}_\pm f}(\xi) d\xi.
\end{equation}
We represent the $n \times n$ orthogonal matrix $R_\pm = (r^{\pm}_{ij})_{1\le i,j \le n}$ as the block matrix
$$ R_{\pm} = \begin{pmatrix}A_{\pm}
& B_{\pm} \\ C_{\pm} & D_{\pm} \end{pmatrix},$$
where $A_{\pm} = r^{\pm}_{11}$, $B_{\pm} = (r_{1j}^{\pm})_{2\le j \le n}$,  $C_{\pm}
= (r_{i1}^{\pm})_{2\le i \le n}$ and $D_{\pm}= (r_{ij}^{\pm})_{2\le i,j \le n}$.
Then
\begin{align}
	&R_{\pm}(\xi) e^{i((x+vt e_1)\cdot\xi I_n + t\sqrt{\Lambda} (\xi))} R_{\pm}(\xi)^t \nonumber \\
	&\quad = 
	\begin{pmatrix}A_{\pm}(\xi)& B_{\pm}(\xi) \\ 
	C_{\pm}(\xi) & D_{\pm}(\xi) \end{pmatrix}
	 \begin{pmatrix}e^{i\Phi(x,\xi)} & 0 \\ 
	 0  & e^{i\Psi(x,\xi)} I_{n-1} \end{pmatrix}
	 \begin{pmatrix}
	 A_{\pm}(\xi)& C_{\pm}(\xi)^t \\ 
	 B_{\pm}(\xi)^t & D_{\pm}(\xi)^t \end{pmatrix} \label{matrix} 
\end{align} 
where 
$$\Phi(x,\xi)=(x+vt e_1)\cdot \xi + t \sqrt{\lambda+2\mu}\,|\xi| \ \ \textrm{and} \ \  \Psi(x,\xi)=(x+vt e_1)\cdot \xi + t\sqrt{\mu}\,|\xi|.$$

Let us set $\alpha =\sqrt{\lambda+2\mu}+v$. For every $N \gg \alpha^{-1}$ we define
\begin{align*}
	E &=\big \{x \in \mathbb{R}^n \colon \angle(e_1,x)\le 2^{-2}(\alpha N)^{-\frac 12} ,\,  |x| \leq \alpha \big\}, \\
	F &=\big\{\xi \in \mathbb{R}^n \colon \angle(e_1, \xi)\le 2^{-2}(\alpha N)^{-\frac 12} ,\,  N/2 \le |\xi| \le N \big\},
\end{align*} 
where $\angle(e_1, x)$ denotes the angle between $e_1$ and $x$. It is easy to see that
\[	|F| \sim N^{\frac{n+1}2} \ \ {\rm{and}} \ \ |E| \sim N^{-\frac{n-1}2}.	\]

If we take 
\begin{equation} \label{t}
	t=t(x) = \frac{-|x|}\alpha\in (-1,0),
\end{equation}
then
\begin{align*}
	|\Phi(x,\xi)|&= \big|(x+vt(x)e_1)\cdot \xi + t(x)\sqrt{\lambda+2\mu}\,|\xi| \big| \\
	&\le |x||\xi|\Big( \big(1-\cos\angle(x,\xi)) + \frac{v}{\alpha} \big(1-\cos \angle(e_1,\xi) \big)\Big) .
\end{align*}
Since $\cos u \ge 1- u^2/2$ we have for $(x,\xi) \in E\times F$ 
\[	1-\cos\angle(x,\xi) \le 2^{-3} (\alpha N)^{-1} \  \  \text{and}  \  \
	1-\cos\angle(e_1,\xi) \le 2^{-5} (\alpha N)^{-1},	\]
from which it follows that
\begin{equation}
	|\Phi(x,\xi)| \leq \alpha N \Big(2^{-3}(\alpha N)^{-1} + \frac{v}{\alpha}2^{-5}(\alpha N)^{-1} \Big) \leq 2^{-2} \label{phi}
\end{equation}
since $v/\alpha<1$.

Let us define $f_F\colon \mathbb R^n\to \mathbb R^n$ by $\widehat {f_F}(\xi)= \chi_F(\xi) e_1 $. By the definition of $F$ and $\mathcal P_\pm$ it is clear that $\mathcal P_+ f_F =f_F$ and $\mathcal P_-f_F= 0$. By \eqref{propa} and \eqref{matrix}, we have 
\begin{align}
	&\sup_{t\in(-1,1)} \big| e^{i t\sqrt{-\Delta^\ast}} f_F(x+v te_1)\big| \nonumber \\
	&\qquad \ge \sup_{t=t(x)} \bigg| \int_F \begin{pmatrix} e^{i\Phi (x,\xi)} A_+(\xi)^2  + e^{i\Psi(x,\xi)} B_+(\xi) B_+(\xi)^t  \\  e^{i\Phi(x,\xi)} C_+(\xi) A_+(\xi)  + e^{i\Psi(x,\xi)} D_+(\xi) B_+(\xi)^t \end{pmatrix} d\xi \bigg|  \nonumber \\
	&\qquad \ge \sup_{t=t(x)} \bigg| \int_F  e^{i\Phi (x,\xi)} A_+(\xi)^2  + e^{i\Psi(x,\xi)} B_+(\xi) B_+(\xi)^t   d\xi \bigg|  \nonumber \\
	&\qquad \ge  \sup_{t=t(x)} \int_{F}  \cos\Phi(x,\xi)  A_+(\xi)^2 -  B_+(\xi) B_+(\xi)^t d\xi . \label{e:lower}
\end{align}
We need to estimate the size of $A_+(\xi)$ and $B_+(\xi)$ for $\xi\in F$.  
\begin{lem}
If $\xi\in F$ then 
\begin{equation}\label{e:blocksize}
|1-A_+(\xi)|\lesssim N^{-\frac12} \ \ \text{and} \ \ |B_+(\xi)| \lesssim  N^{-\frac12}. 
\end{equation}
\end{lem}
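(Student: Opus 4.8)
The plan is to read off the relevant entries of $R_+(\xi)$ straight from its geometric definition rather than from any explicit formula. Recall that $R_+(\xi)=\rho_+(\omega)$ with $\omega=\xi/|\xi|$, and that $\rho_+(\omega)^t$ is, by definition, the rotation sending $\omega$ to $e_1$; hence $R_+(\xi)e_1=\rho_+(\omega)e_1=\omega$, so the first column of $R_+(\xi)$ is exactly $\omega=(\omega_1,\dots,\omega_n)^t$. In particular $A_+(\xi)=r_{11}^+=e_1\cdot\omega=\cos\beta$, where $\beta:=\angle(e_1,\xi)$.

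Next I would invoke that $R_+(\xi)\in\mathrm{SO}(n)$ has orthonormal rows: its first row is the block vector $(A_+(\xi),B_+(\xi))$, which has unit length, so $A_+(\xi)^2+|B_+(\xi)|^2=1$ and therefore $|B_+(\xi)|^2=1-\cos^2\beta=\sin^2\beta$. (Equivalently, one could diagonalize $\rho_+(\omega)$ explicitly as the rotation by the angle $\beta$ in the plane $\mathrm{span}\{e_1,\omega\}$ acting as the identity on its orthogonal complement, and read the entries off that normal form.)

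It then remains only to feed in the angular constraint built into $F$. For $\xi\in F$ we have $\beta\le 2^{-2}(\alpha N)^{-1/2}$, and since $\alpha=\sqrt{\lambda+2\mu}+v$ is a fixed constant this gives $\beta\lesssim N^{-1/2}$. Hence, using the elementary bounds $1-\cos\beta\le\beta$ and $|\sin\beta|\le\beta$ valid for $\beta\ge0$,
\[|1-A_+(\xi)|=1-\cos\beta\le\beta\lesssim N^{-1/2},\qquad |B_+(\xi)|=|\sin\beta|\le\beta\lesssim N^{-1/2},\]
which is \eqref{e:blocksize}; in fact $1-\cos\beta\le\beta^2/2\lesssim N^{-1}$, though the stated bound is all that is needed below. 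There is no genuine analytic obstacle here: the only points requiring care are the correct identification of the first column of $R_+(\xi)$ directly from the definition of $\rho_+$, and keeping track of the dependence of the implicit constants on the fixed parameters $\lambda,\mu,v$ through $\alpha$.
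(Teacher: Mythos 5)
Your proof is correct, and it takes a mildly different route from the paper's. The paper proves the bound by the algebraic identity $(\rho_+(\omega)^t-I_n)e_1=\rho_+(\omega)^t(e_1-\omega)$, which together with orthogonality of $\rho_+(\omega)^t$ gives $|(A_+(\xi)-1,\,B_+(\xi))|=|e_1-\omega|\le\angle(e_1,\xi)$ in a single stroke. You instead read off $A_+(\xi)=\cos\beta$ from the first column $R_+(\xi)e_1=\omega$, then use orthonormality of the rows of $R_+(\xi)$ to get $|B_+(\xi)|=|\sin\beta|$, and finally the elementary bounds $1-\cos\beta\le\beta$ and $|\sin\beta|\le\beta$. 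Both arguments are short, elementary, and rely only on the defining property $\rho_+(\omega)^t\omega=e_1$ plus orthogonality; yours has the small advantage of producing the exact trigonometric identities $A_+(\xi)=\cos\beta$, $|B_+(\xi)|^2=\sin^2\beta$, which make the subsequent quantitative step $A_+(\xi)^2=1-|B_+(\xi)|^2\gtrsim 1-N^{-1}$ and $|B_+(\xi)|^2\lesssim N^{-1}$ (the paper's \eqref{e:blocksize1}) immediate rather than requiring an extra squaring of the $N^{-1/2}$ estimate.
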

\begin{proof}
Let us write $\omega=\xi/|\xi|$ for $\xi\in F$ and recall from Section \ref{sec2} that  $\rho_+(\omega)^t \omega= e_1$ and $\rho_+(e_1)=I_n$. We observe that 
\begin{align*}
	( \rho_+(\omega)^t -I_n ) e_1 
	&= ( \rho_+(\omega)^t -I_n ) \omega + ( \rho_+(\omega)^t -I_n ) (e_1-\omega)\\
	&= e_1-\omega + ( \rho_+(\omega)^t -I_n ) (e_1-\omega) \\
	&= \rho_+(\omega)^t(e_1-\omega) ,
\end{align*}
which leads us to 
\[	|(A_+(\xi) -1, B_+(\xi))|=|( \rho_+(\omega)^t -I_n ) e_1 | =|e_1-\omega| \le \angle(e_1, \xi) \lesssim (\alpha N)^{-\frac{1}{2}} .	\]
This completes the proof \eqref{e:blocksize}.
\end{proof}
By the estimates \eqref{e:blocksize}, we have 
\begin{equation}\label{e:blocksize1}
A_+(\xi)^2 \gtrsim 1-N^{-1} \ \ \text{and} \ \  B_{+}(\xi) B_{+}(\xi)^t \lesssim N^{-1}.
\end{equation}
Combining these with \eqref{phi} we see that \eqref{e:lower} is estimated by
\[	  	 
\gtrsim (1-N^{-1}) \sup_{t=t(x)} \int_{F} \cos\Phi(x,\xi) d\xi -  \int_{F} N^{-1} d\xi  \gtrsim |F|.	\]
for every $N$ large enough. Thus, we obtain
\begin{equation}\label{left}
	\Big\|\sup_{t\in(-1,1)} \big|e^{i t\sqrt{-\Delta^\ast}} f_F(x+v t e_1)\big|\Big\|_{L^2(\mathbb{R}^n)} 
	\gtrsim |F||E|^{\frac12}.
\end{equation}
On the other hand,
\begin{equation}\label{right}
	\|f_F\|_{H^s(\mathbb{R}^n)} = \Big(\int_{\mathbb{R}^n} (1+|\xi|^2)^{s} \widehat{f_F}(\xi) d\xi \Big)^\frac12 \lesssim |F|^{\frac12}N^s,
\end{equation}
By  \eqref{left} and \eqref{right}, the maximal estimate \eqref{max} implies 
\begin{equation} \label{max1}
	 N^s \gtrsim |F|^\frac12 |E|^\frac12  \sim N^{\frac12}
\end{equation}
for all $N$ large enough. Therefore we conclude that the estimate \eqref{max} does not hold for $s< 1/2$.

\section{Proof of the estimate \eqref{max}} 
Let us fix a cutoff function $\phi\in C_0^{\infty}(\mathbb{R})$ such that $\phi(t)=1$ for $|t|\le 1$ and $\phi(t)=0$ for $|t|\ge 2$, and denote
\begin{equation*}
T_{v,\theta}f(x,t)= \phi(t)e^{i t\sqrt{-\Delta^\ast}}f(x+v t \theta).
\end{equation*}
In order to prove \eqref{max} we need only to show the following:
\begin{align}
	\label{xt}
	\|T_{v,\theta}f\|_{L^2_{x,t}(\mathbb{R}^{n+1})} &\lesssim \|f\|_{L^2(\mathbb{R}^n)}, \\
	\label{xt'}
	\|\partial_t T_{v,\theta}f\|_{L^2_{x,t}(\mathbb{R}^{n+1} )} &\lesssim \|f\|_{H^1(\mathbb{R}^n)}.
\end{align}
Indeed, the estimates imply 
\begin{equation}\label{t1}
	\|T_{v,\theta}f\|_{L_x^2(\mathbb{R}^n;H_t^1(\mathbb{R}))} \lesssim \|f\|_{H^1(\mathbb{R}^n)}.
\end{equation}
Furthermore, by complex interpolation (see \cite{BL})
it follows from \eqref{xt} and \eqref{t1} that, for $0<s<1$,
\begin{equation}\label{int}
	\|T_{v,\theta}f\|_{L_x^2(\mathbb{R}^n;H_t^s(\mathbb{R}))} \lesssim \|f\|_{H^s(\mathbb{R}^n)}.
\end{equation}
By the Sobolev embedding $H^s(\mathbb{R}) \hookrightarrow L^\infty(\mathbb{R})$ for $s>1/2$, we conclude 
\begin{equation*}
	\|T_{v,\theta}f\|_{L_x^2 (\mathbb{R}^n;L_t^\infty(\mathbb{R}))} 
	\lesssim \|f\|_{H^s(\mathbb{R}^n)},
\end{equation*}
which completes the proof of  \eqref{max}.

Now let us show \eqref{xt} and \eqref{xt'}.
Using the diagonalization \eqref{dia} we write
\begin{equation} \label{e:tvth}
	T_{v,\theta}f(x,t)
	= \phi(t) \sum_\pm  \int_{\mathbb{R}^n} e^{ ix\cdot\xi I_n} R_\pm(\xi)  e^{it(v \theta \cdot \xi I_n +  \sqrt \Lambda (\xi))} R_\pm(\xi)^t \widehat{\mathcal P_\pm f}(\xi) d\xi.
\end{equation}
Then, by the Plancherel theorem 
\begin{align*}
	\|T_{v,\theta}f\|_{L^2_{x,t}(\mathbb{R}^{n+1})} ^2
	&\lesssim \sum_\pm  \int_{\mathbb{R}^{n+1}}  \big| \phi(t)  R_\pm(\xi)  e^{it(v \theta \cdot \xi I_n +  \sqrt \Lambda (\xi))} R_\pm(\xi)^t \widehat{\mathcal P_\pm f}(\xi) \big|^2 d\xi dt \\
	&\lesssim\|f\|_{L^2(\mathbb{R}^n)} ^2.
\end{align*}
On the other hand, $|\partial_t T_{v,\theta}f(x,t)|$ is estimated by
\begin{align*}
&\bigg| \phi' (t) \sum_\pm  \int_{\mathbb{R}^n} e^{ ix\cdot\xi I_n} R_\pm(\xi)  e^{it(v \theta \cdot \xi I_n +  \sqrt \Lambda (\xi))} R_\pm(\xi)^t \widehat{\mathcal P_\pm f}(\xi) d\xi \bigg| \\
&+ \bigg| \phi(t) \sum_\pm  \int_{\mathbb{R}^n} e^{ ix\cdot\xi I_n} R_\pm(\xi) (v\theta \cdot \xi I_n +\sqrt{\Lambda})  e^{it(v \theta \cdot \xi I_n +  \sqrt \Lambda (\xi))} R_\pm(\xi)^t \widehat{\mathcal P_\pm f}(\xi) d\xi \bigg|.
\end{align*}
Then by the Plancherel theorem, we obtain \eqref{xt'}.


\end{document}